\documentclass[11pt]{article}
\usepackage[english]{babel}
\usepackage{amssymb,amsmath,amsthm}
\textwidth=165truemm \textheight=240truemm \voffset-2.5truecm
\hoffset-2truecm \hfuzz17pt
\parindent=12pt

\newtheorem{theorem}{Theorem}[section]
\newtheorem{lemma}[theorem]{Lemma}
\newtheorem{proposition}[theorem]{Proposition}
\newtheorem{corollary}[theorem]{Corollary}

\newtheorem{question}[theorem]{Question}

{\theoremstyle{definition}}
{\theoremstyle{definition}\newtheorem{definition}[theorem]{Definition}}
{\theoremstyle{definition}}

\newtheorem*{thmgs}{Theorem GS}
\newtheorem*{corgs}{Corollary GS}
\newtheorem*{corgs1}{Corollary GS1}

\numberwithin{equation}{section}

\def\N{{\mathbb N}}
\def\Z{{\mathbb Z}}

\def\K{{\mathbb K}}

\def\epsilon{\varepsilon}
\def\kappa{\varkappa}
\def\phi{\varphi}
\def\leq{\leqslant}
\def\geq{\geqslant}

\def\dim{{\rm dim}\,}
\def\ssub#1#2{#1_{{}_{{\scriptstyle #2}}}}
\def\dimk{{\ssub{\dim}{\K}\,}}

\def\spann{\hbox{\tt span}\,}

\def\deg{\hbox{\tt deg}\,}

\title{Optimal $5$-step nilpotent quadratic algebras}

\author{Natalia Iyudu, Stanislav Shkarin}

\date{}

\begin{document}

\maketitle

\begin{abstract}By the Golod--Shafarevich Theorem, an associative
algebra $R$ given by $n$ generators and $d<\frac{n^2}{3}$
homogeneous quadratic relations is not 5-step nilpotent. We prove
that this estimate is optimal. Namely, we show that for every
positive integer $n$, there is an algebra $R$ given by $n$
generators and $\bigl\lceil\frac{n^2}{3}\bigr\rceil$ homogeneous
quadratic relations such that $R$ is 5-step nilpotent.
\end{abstract}

\small \noindent{\bf MSC:} \ \ 17A45, 16A22

\noindent{\bf Keywords:} \ \ quadratic algebras, Golod--Shafarevich
theorem, Hilbert series, Anick's conjecture, nilpotency index. \normalsize

\section{Introduction \label{s1}}\rm

In the series of papers \cite{na, ns, ns1}
we study the question on whether the famous Golod-Shafarevich estimate, which
gives a lower bound for the Hilbert series of a (noncommutative) algebra, is attained.
This question was considered by Anick in his 1983 paper, "Generic algebras and
CW-complexes" \cite{ani1}, in connection with the study of rationality of Poincare series associated to loop spaces. Here instead of looking at the Hilbert series as a whole,  we concentrate on the conditions of vanishing of its certain component, that is on the condition of the nilpotency of given index. It turns out that in the case when the lower estimate of the number of relations necessary for the vanishing of a given component is rational, we can apply our technique to prove that this estimate is attained.


For  a wider information on related problems about Golod-Shafarevich algebras see
\cite{Zelm, Sm1,Sm2,Sm3, Ersh}, mentioned above papers \cite{na, ns, ns1}, and references therein.

Throughout this paper $\K$ is an arbitrary field, $\Z_+$ is the
set of non-negative integers and $\N$ is the set of positive
integers. For a real number $t$, $\lceil t\rceil$ is the smallest integer
$\geq t$. For a set $X$, $\K\langle X\rangle$ stands for the free
algebra over $\K$ generated by $X$ equipped with the natural degree grading, where
each $x\in X$ has degree $1$. The symbol $\langle X\rangle$ stands for the set
of all monomials (=words) in the alphabet $X$.

We deal with {\it quadratic algebras}, that is, algebras $R$ given
by homogeneous relations of degree 2. Such algebras inherit the described above grading of  $\K\langle X\rangle$ and are degree
graded. If $X$ is finite, one can consider the Hilbert series of $R$:
\begin{equation*}
H_R(t)=\sum_{q=0}^\infty (\dimk R_q)\,t^q,
\end{equation*}
where $R_q$ is the space of homogeneous elements of $R$ of degree $q$. See \cite{popo,ufnar,vers} for more information on
quadratic algebras.

\begin{definition}\label{kstep} A quadratic algebra $R$ is called
{\it $k$-step nilpotent} if $R_k=\{0\}$.
\end{definition}

Note that if $R$ is $k$-step nilpotent, then $R_m=\{0\}$ for $m\geq
k$. Thus $R$ is $k$-step nilpotent if and only if $H_R$ is a polynomial
of degree $<k$, provided $X$ is finite.

On the Hilbert series we consider a componentwise ordering:
for two power series $a(t)$ and $b(t)$ with real coefficients we
write $a(t)\geq b(t)$ if $a_j\geq b_j$ for all $j\in\Z_+$.
Recall the following remarkable estimate of Golod and Shafarevich \cite{gosh}.
Although their result applies to any algebra defined by homogeneous relations,
we state it here in the case of quadratic algebras.

\begin{thmgs}Let, $n\in\N$, $0\leq d\leq
n^2$ and $R$ be a quadratic $\K$-algebra with $n$ generators and $d$
relations. Then $H_R(t)(1-nt+dt^2)\geq 1$.
\end{thmgs}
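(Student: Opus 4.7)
\medskip

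\noindent\textbf{Proof plan.} The plan is to reduce the inequality of power series to a system of linear recursive inequalities on the dimensions $\dim R_k$ and then read off the estimate coefficient-wise. Write $R=T(V)/I$, where $V\subset\K\langle X\rangle$ is the $n$-dimensional span of the generators, and $I$ is the two-sided ideal generated by a subspace $W\subset V\otimes V$ with $\dim W\leq d$ encoding the relations. The homogeneous components decompose as $R_k=V^{\otimes k}/I_k$ with $I_k=\sum_{i=0}^{k-2}V^{\otimes i}\otimes W\otimes V^{\otimes(k-2-i)}$. The values $\dim R_0=1$, $\dim R_1=n$ and $\dim R_2\geq n^2-d$ are immediate; the content is the recursion
\[
\dim R_k\;\geq\;n\,\dim R_{k-1}-d\,\dim R_{k-2}\qquad(k\geq 2).
\]

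To establish this recursion I would analyse the multiplication map $\mu_k\colon V\otimes R_{k-1}\to R_k$. It is surjective (every element of $R_k$ lifts to $V^{\otimes k}=V\otimes V^{\otimes(k-1)}$), so $\dim R_k=n\dim R_{k-1}-\dim\ker\mu_k$. The kernel is computed inside $V^{\otimes k}/(V\otimes I_{k-1})$ as $\ker\mu_k=(I_k+V\otimes I_{k-1})/(V\otimes I_{k-1})$. The key observation is the splitting
\[
I_k\;=\;W\otimes V^{\otimes(k-2)}\;+\;V\otimes I_{k-1},
\]
obtained by separating, in the defining sum for $I_k$, the term $i=0$ from the terms $i\geq1$ (which clearly land in $V\otimes I_{k-1}$). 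Plugging this in yields a surjection $W\otimes V^{\otimes(k-2)}\twoheadrightarrow\ker\mu_k$ whose kernel contains $W\otimes I_{k-2}$, producing a surjection $W\otimes R_{k-2}\twoheadrightarrow\ker\mu_k$ and hence $\dim\ker\mu_k\leq d\,\dim R_{k-2}$. This gives the recursion.

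Finally, I would convert to generating functions: the coefficient of $t^k$ in $H_R(t)(1-nt+dt^2)$ equals $\dim R_k-n\,\dim R_{k-1}+d\,\dim R_{k-2}$ (with the convention that negative-index dimensions vanish). For $k=0$ this coefficient is $1$, for $k=1$ it is $n-n\cdot 1=0$, and for $k\geq 2$ the recursion proved above says it is $\geq0$. Coefficient-wise this is exactly $H_R(t)(1-nt+dt^2)\geq 1$.

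The only subtle step is the splitting $I_k=W\otimes V^{\otimes(k-2)}+V\otimes I_{k-1}$ and the identification of the induced map $W\otimes R_{k-2}\to\ker\mu_k$; everything else is either linear-algebraic bookkeeping or a direct passage to generating functions. The rest of the argument is completely formal.
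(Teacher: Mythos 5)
Your proof is correct: the splitting $I_k=W\otimes V^{\otimes(k-2)}+V\otimes I_{k-1}$, the induced surjection $W\otimes R_{k-2}\twoheadrightarrow\ker\mu_k$, and the resulting recursion $\dim R_k\geq n\,\dim R_{k-1}-d\,\dim R_{k-2}$ are all sound, and reading off coefficients gives exactly the componentwise inequality $H_R(t)(1-nt+dt^2)\geq 1$. There is nothing to compare against in the paper itself, which only recalls Theorem~GS from \cite{gosh} without proof; your argument is the standard one for the quadratic case (as in \cite{popo} or \cite{ufnar}).
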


In \cite{popo} it is observed that Theorem~GS implies the following result.

\begin{corgs}If $R$ is a quadratic $\K$-algebra given by $n$
generators and $d<\phi_k n^2$ relations, then $\dim R_k>0$, where
$\frac1{\phi_k}=4\cos^{2}\bigl(\frac{\pi}{k+1}\bigr)$.
\end{corgs}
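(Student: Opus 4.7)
The plan is to deduce the corollary directly from Theorem GS by a coefficient-by-coefficient analysis of the power series $1/p(t)$, where $p(t) = 1-nt+dt^2$. I would expand $1/p(t) = \sum_{j\geq 0} a_j t^j$; the coefficients then satisfy the linear recurrence $a_j = n a_{j-1} - d a_{j-2}$ with $a_0 = 1$, $a_1 = n$. Theorem GS says that $H_R(t)\, p(t) = 1 + G(t)$ for some power series $G$ with non-negative coefficients and $G(0) = 0$. Multiplying by $1/p(t)$ produces the formal identity
$$\dim R_k \;=\; a_k + \sum_{j=1}^{k} g_j\, a_{k-j},$$
where $g_j = [t^j]G(t) \geq 0$. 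Hence, provided that all of $a_0,a_1,\ldots,a_k$ are non-negative, each term on the right is non-negative and one obtains $\dim R_k \geq a_k$. The problem is thereby reduced to proving that $a_k > 0$ whenever $d < \phi_k n^2$.

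To control the signs of the $a_j$'s, I would factor $p(t) = (1-rt)(1-st)$ with $r+s = n$, $rs = d$. The case $0 \leq d \leq n^2/4$ is immediate, since then $r,s$ are non-negative reals and $a_j = \sum_{i+l = j} r^i s^l \geq 0$. The substantive case is $n^2/4 < d < \phi_k n^2$, where the roots are complex conjugates $\sqrt{d}\, e^{\pm i\theta}$ with $\cos\theta = n/(2\sqrt{d}) \in (0,1)$, and a direct computation yields the closed form
$$a_j \;=\; d^{j/2}\,\frac{\sin((j+1)\theta)}{\sin\theta}.$$
The hypothesis $d < \phi_k n^2$ rewrites as $\cos^2\theta > \cos^2(\pi/(k+1))$, whence $\theta < \pi/(k+1)$. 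Therefore $0 < (j+1)\theta < (k+1)\theta < \pi$ for every $0 \leq j \leq k$, each $\sin((j+1)\theta)$ is strictly positive, and consequently $a_j > 0$ throughout the range $0 \leq j \leq k$. Substituting $j=k$ into the displayed identity for $\dim R_k$ then gives $\dim R_k \geq a_k > 0$, as required.

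The only thing to be careful about, rather than any deep obstacle, is the bookkeeping that converts the componentwise estimate of Theorem GS into the explicit coefficient formula for $H_R(t)$ through the factorisation $(1 + G(t))/p(t)$. Once that step is set up, the rest is elementary trigonometry; the angle $\pi/(k+1)$ appearing in $\phi_k$ is precisely the smallest positive $\theta$ at which $\sin((k+1)\theta)$ vanishes, which is exactly why the threshold value $\phi_k n^2$ is what appears in the corollary.
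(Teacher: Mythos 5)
Your argument is correct and complete. Note that the paper itself offers no proof of Corollary~GS --- it is quoted from Polishchuk--Positselski --- so there is nothing internal to compare against; your derivation is precisely the standard one that reference has in mind: write $H_R=(1+G)/p$ with $G\geq 0$, expand $1/p(t)=1/(1-nt+dt^2)=\sum a_jt^j$, observe $a_j=d^{j/2}\sin((j+1)\theta)/\sin\theta$ in the complex-root case, and note that $d<\phi_k n^2$ is exactly the condition $\theta<\pi/(k+1)$ keeping $a_0,\dots,a_k$ positive. Two cosmetic points only: in the real-root case $0\leq d\leq n^2/4$ you assert $a_j\geq 0$ but the conclusion needs $a_k>0$, which follows from $a_k\geq r^k>0$ with $r=\bigl(n+\sqrt{n^2-4d}\bigr)/2\geq n/2$; and the trigonometric comparison $\cos^2\theta>\cos^2(\pi/(k+1))\Rightarrow\theta<\pi/(k+1)$ uses that both angles lie in $(0,\pi/2)$, which requires $k\geq 2$ (for $k=1$ the constant $\phi_k$ is not even defined), a restriction implicit in the statement.
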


Anick \cite{ani1,ani2} proved that the estimate in
Corollary~GS is optimal for $k=3$. Namely, $\phi_3=\frac12$ and
Anick has shown that for every $n\in\N$, there is a quadratic
$\K$-algebra $R$ given by $n$ generators and
$\bigl\lceil\frac{n^2}{2}\bigr\rceil$ relations
such that $R$ is $3$-step nilpotent. The authors  proved in \cite{ns1}
that the estimate from Corollary~GS is asymptotically optimal for
every $k\geq 3$. So far, the only numbers $k$ for which the
optimality was known were $k=2$ (trivial) and $k=3$ due to Anick. In
\cite{na}, the authors have developed a technique of constructing
quadratic algebras with small dimensions of the graded components. In the
present paper this technique is enhanced and used to show that the
estimate provided by Corollary~GS is optimal for $k=5$.

\begin{theorem}\label{main} For every $n\in\N$, there exists a
$\K$-algebra $R$ given by $n$ generators and
$\bigl\lceil\frac{n^2}{3}\bigr\rceil$ homogeneous quadratic
relations such that $R$ is $5$-step nilpotent.
\end{theorem}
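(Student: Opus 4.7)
The task is to produce, for each $n$, a quadratic algebra $R$ on $n$ generators with $d=\lceil n^2/3\rceil$ relations such that $R_5=0$. The plan begins with the arithmetic of the Golod-Shafarevich recursion $a_k=na_{k-1}-da_{k-2}$ (with $a_0=1$, $a_1=n$), which yields
\[
a_2=n^2-d,\quad a_3=n^3-2nd,\quad a_4=n^4-3n^2d+d^2,\quad a_5=3n\bigl(d-\tfrac{n^2}{3}\bigr)(d-n^2).
\]
Thus $a_5=0$ exactly when $3\mid n$ and $d=n^2/3$, while $a_5<0$ for $n\geq 2$ with $3\nmid n$. In either case the GS bound does not by itself force $R_5=0$, so one must produce the algebra by an explicit construction whose Hilbert series matches $\sum_{k=0}^{4}a_kt^k$ in degrees $\leq 4$ and vanishes in degree $5$.

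I would separate into the three residue classes of $n$ modulo $3$. For $n=3m$, partition the generating set into three equal blocks and decompose $V=V_1\oplus V_2\oplus V_3$ with $\dim V_i=m$; then choose the $3m^2$-dimensional relation subspace $S\subset V\otimes V$ with a block-compatible structure, for instance containing the three ``diagonal'' pieces $V_i\otimes V_i$ together with a carefully tuned family of bilinear linking relations across the blocks. For $n=3m+1$ and $n=3m+2$, bootstrap from the $n=3m$ construction by adjoining one or two extra generators with the additional $2m+1$ or $4m+2$ relations needed to reach $\lceil n^2/3\rceil$.

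The verification proceeds in two stages. The first is the Hilbert series computation through degree $4$: linear independence of the chosen relations gives $\dim R_2=n^2-d$, and then the equalities $\dim R_3=a_3$ and $\dim R_4=a_4$ reduce to controlling the overlap spaces $(V^{\otimes i}\otimes S)\cap(S\otimes V^{\otimes i})$ for $i=1,2$; this is exactly the framework of \cite{na}. The second stage is to show $R_5=0$, which is equivalent to the spanning identity
\[
V^{\otimes 5} = \sum_{i=0}^{3} V^{\otimes i}\otimes S\otimes V^{\otimes(3-i)}.
\]

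The hardest part will be this spanning identity. The overlap analysis of \cite{na} must be pushed one degree further than before, and at the same time the design of $S$ must be constrained so that the spanning is tight. The cases $n\equiv 1,2\pmod 3$ are the most delicate: there $d>n^2/3$ so the GS bound is slack in degree $5$, and the collapse $R_5=0$ must be forced not by a Hilbert series identity but by the fine combinatorial structure of $S$. This is where the enhancement of the technique of \cite{na} alluded to in the introduction will have to do the substantive work.
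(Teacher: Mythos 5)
There is a genuine gap: your text is a plan rather than a proof, and the two places where all the mathematical content must live --- the explicit choice of the relation space $S$ and the verification of the spanning identity $V^{\otimes 5}=\sum_{i=0}^{3}V^{\otimes i}\otimes S\otimes V^{\otimes(3-i)}$ --- are both left open (you say so yourself: ``the hardest part will be this spanning identity''). Worse, the one concrete hint you give for $n=3m$ cannot work: if $S$ contains the three diagonal pieces $V_i\otimes V_i$, then since $\dim\bigl(\bigoplus_i V_i\otimes V_i\bigr)=3m^2=d$ there is no room left for any ``linking relations,'' so $S$ is forced to equal the diagonal sum, and the quotient $T(V)/(V_1^2,V_2^2,V_3^2)$ is infinite dimensional (all words alternating between distinct blocks survive). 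A second, separate problem is your first verification stage: matching $\dim R_3=a_3$ and $\dim R_4=a_4$ is not needed for the theorem and is not what happens in the paper --- the algebras actually constructed have, e.g. for $n=4$, $H_R(t)=1+4t+10t^2+18t^3+21t^4$, so $\dim R_3=18>16=a_3$ and $\dim R_4=21>4=a_4$. Insisting on equality with the Golod--Shafarevich bound in degrees $3$ and $4$ turns the problem into (a case of) the much harder Anick conjecture on attaining the full GS series, and there is no reason to believe it is achievable simultaneously with $R_5=0$.

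For comparison, the paper avoids constructing $S$ for general $n$ altogether. It proves an ``inflation'' lemma: starting from generators split into two blocks $X$, $Y$ and relations whose coefficients do not depend on a replication index, one gets algebras $R^{(\alpha,\beta)}$ on $\alpha g_x+\beta g_y$ generators with $\alpha^2 r_{xx}+\beta^2 r_{yy}+\alpha\beta r_{xy}$ relations, and $k$-step nilpotency of $R^{(1,1)}$ propagates to all $R^{(\alpha,\beta)}$ (Lemma~\ref{key}, Corollary~\ref{gjsdfg}). The whole theorem then reduces to exhibiting two fixed $5$-step nilpotent algebras, $R_{3,1}$ on $4$ generators with relation counts $(r_{xx},r_{yy},r_{xy})=(3,1,2)$ and $R_{3,2}$ on $5$ generators with $(3,2,4)$, whose nilpotency is checked by a Gr\"obner basis computation (with a leading-coefficient argument to cover all characteristics); the three residue classes of $n$ modulo $3$ come from choosing $(\alpha,\beta)$ equal to $(\alpha,0)$ or $(\alpha,1)$ for these two seeds. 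If you want to salvage your approach, the missing idea is precisely such a reduction to finitely many base cases; a direct uniform construction with a degree-$5$ spanning proof for all $n$ is not supplied and would be substantially harder.
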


Since $\phi_5=\frac13$, Theorem~\ref{main} shows that the estimate
in Corollary~GS is optimal for $k=5$. It is worth mentioning, that
we take an advantage of the fact that $\phi_5$ is rational.
Unfortunately, our method, as it is, does not work in the case when
$\phi_k$ is irrational, and $\phi_2=1$, $\phi_3=\frac12$ and
$\phi_5=\frac13$ are the only rational numbers among $\phi_k$. For
instance, $\phi_4=\frac{3-\sqrt 5}{2}$ and, hence, the
optimality of Corollary~GS for $k=4$ and an arbitrary $n$ remains an
open question. In \cite{ns1} it is shown
that the estimate in Corollary~GS for $k=4$ is optimal when $n$ is a
Fibonacci number, while in \cite{na} its optimality is verified for
$n\leq 7$. Since $8$ is a Fibonacci number, the optimality for $n\leq 8$ follows.
Note also that Theorem~\ref{main} provides yet another proof of the conjecture of
Vershik \cite{vers} that a generic quadratic algebra over an infinite field with $n\geq 3$
generators and $\frac{n(n-1)}{2}$ relations is finite dimensional.

\section{Main lemma}

In this section we describe an algorithm of obtaining an infinite
collection of $k$-step nilpotent quadratic algebras from one
$k$-step nilpotent quadratic algebra. This is an enhanced version of
a similar algorithm from \cite{na}. Note that in the present form,
it still admits various generalizations.

Our data comprises $g_x,g_y,r_{xx},r_{xy},r_{yy}\in\N$, two disjoint
sets
$$
X=\{x_{j,s}:\text{$1\leq j\leq g_x$, $s\in\N$}\},\qquad
Y=\{y_{k,t}:\text{$1\leq k\leq g_y$, $t\in\N$}\}
$$
of pairwise distinct variables (=generators) $x_{j,s}$ and
$y_{k,t}$ and the finite collection of scalars
$$
\text{$c^{xx}_{p,j,l},c^{yy}_{q,k,w},c^{xy}_{r,j,k},c^{yx}_{r,k,j}\in\K$
\qquad for}\qquad \begin{array}{l}\text{$1\leq j,l\leq g_x$, \
$1\leq k,w\leq g_y$ and}\\ \text{$1\leq p\leq r_{xx}$, \ $1\leq
q\leq r_{yy}$, \ $1\leq r\leq r_{xy}$}.\end{array}
$$
For $\alpha,\beta\in\Z_+$, let
$$
X_\alpha=\{x_{j,s}:\text{$1\leq j\leq g_x$, $1\leq s\leq
\alpha$}\},\qquad Y_\beta=\{y_{k,t}:\text{$1\leq k\leq g_y$, $1\leq
t\leq\beta$}\}.
$$
Note that $X_0=Y_0=\varnothing$. If $\alpha+\beta\geq 1$, by $R^{(\alpha,\beta)}$ we denote the algebra given by the
generator set $X_\alpha\cup Y_\beta$ and the quadratic relations
\begin{alignat}{2}
f^{xx}_{p,a,b}&=\sum_{1\leq j,l\leq  g_x}
c^{xx}_{p,j,l}x_{j,a}x_{l,b}\quad &&\text{for $1\leq p\leq r_{xx}$,
$1\leq a,b\leq \alpha$}, \label{rab1}
\\
f^{yy}_{q,s,t}&=\sum_{1\leq k,w\leq  g_y}c^{yy}_{q,k,w}
y_{k,s}y_{w,t}\quad &&\text{for $1\leq q\leq r_{yy}$, $1\leq s,t\leq
\beta$}, \label{rab2}
\\
f^{xy}_{r,a,s}&=\sum\limits_{1\leq j\leq  g_x\atop 1\leq k\leq g_y}
(c^{xy}_{r,j,k} x_{j,a}y_{k,s}+c^{yx}_{r,k,j} y_{k,s}x_{j,a}) \quad
&&\text{for $1\leq r\leq r_{xy}$, $1\leq a\leq \alpha$, $1\leq s\leq
\beta$}. \label{rab3}
\end{alignat}

We need some extra notation. First, we denote the ideal in
$\K\langle X_\alpha\cup Y_\beta\rangle$ generated by the above
relations by the symbol $I_{\alpha,\beta}$. For a monomial (=a word)
$U$ in the alphabet $X\cup Y$, its degree (=the number of letters)
will be denoted $\deg U$. Consider the algebra homomorphisms
$$
{\cal S}_x:\K\langle X\cup Y\rangle\to \K\langle X\rangle,\ \ {\cal
S}_y:\K\langle X\cup Y\rangle\to \K\langle Y\rangle\ \ \text{and}\ \
\Phi:\K\langle X\cup Y\rangle\to \K\langle X_1\cup Y_1\rangle
$$
defined by their action on the generators as follows:
$S_x(x_{j,s})=x_{j,s}$, $S_x(y_{k,t})=1$, $S_y(x_{j,s})=1$,
$S_y(y_{k,t})=y_{k,t}$, $\Phi(x_{j,s})=x_{j,1}$ and
$\Phi(y_{k,t})=y_{k,1}$ for $1\leq j\leq g_x$, $1\leq k\leq g_y$ and
$s,t\in\N$. Informally speaking, ${\cal S}_x$ removes all
$y_{k,t}$ from each monomial, while ${\cal S}_y$ removes all
$x_{j,s}$ from each monomial. Obviously, for every $U\in\langle X\cup Y\rangle$,
\begin{align*}
\deg U&=\deg {\cal S}_x(U)+\deg {\cal S}_y(U),
\\
\deg\Phi (U)&=\deg U,
\\
\Phi({\cal S}_x (U))&={\cal S}_x(\Phi (U)),
\\
\Phi({\cal S}_y (U))&={\cal S}_y(\Phi(U)).
\end{align*}

If $n\in\N$, $n_x\in\Z$, $0\leq n_x\leq n$ and ${\bf
s}=(s_1,\dots,s_{n_x})\in\N^{n_x}$, ${\bf
t}=(t_1,\dots,t_{n_y})\in\N^{n-n_x}$, then we
define $M_{n,n_x}^{{\bf s},{\bf t}}$ to be the linear span of all
$U\in \langle X\cup Y\rangle$ for which ${\cal
S}_x(U)=x_{j_1,s_1}\dots x_{j_{n_x},s_{n_x}}$ and ${\cal
S}_y(U)=y_{k_1,t_1}\dots y_{k_{n_y},t_{n_y}}$ with $1\leq j_a\leq
g_x$ and $1\leq k_b\leq g_y$. It is straightforward to see that the
space $H_n$ of all homogeneous degree $n$ elements of $\K\langle
X\cup Y\rangle$ is the {\bf direct} sum of the spaces
$M_{n,n_x}^{{\bf s},{\bf t}}$ for all admissible choices of $n_x$,
$\bf s$ and $\bf t$. Furthermore,
\begin{equation}\label{isom}
\text{$\Phi$ restricted to $M_{n,n_x}^{{\bf s},{\bf t}}$ is a linear
isomorphism of $M_{n,n_x}^{{\bf s},{\bf t}}$ and $M_{n,n_x}^{{\bf
1},{\bf 1}}$, where ${\bf 1}=(1,\dots,1)$.}
\end{equation}

Next, if $f$ is one of the relations defined in
(\ref{rab1}--\ref{rab3}) and $U,V\in\langle X\cup Y\rangle$, then
\begin{equation}\label{TT}
\text {$UfV$ belongs to exactly one of the spaces $M_{n,n_x}^{{\bf
s},{\bf t}}$ (with $n=\deg U+\deg V+2$).}
\end{equation}
This easily follows from the shape of the relations $f$.

\begin{lemma}\label{mut}
Let $U,V\in\langle X_1\cup Y_1\rangle$ and $f$ be one of the relations defined in {\rm
(\ref{rab1}--\ref{rab3})} with $\alpha=\beta=1$. Let also $n\in\N$ and $n_x\in
\{0,\dots,n\}$ be such that $UfV\in M_{n,n_x}^{{\bf 1},{\bf 1}}$
$($they do exist and are unique according to $(\ref{TT}))$. Then for
every ${\bf s}\in\N^{n_x}$ and ${\bf t}\in\N^{n-n_x}$ with
components bounded above by $\alpha\in\N$ and $\beta\in\N$ respectively, there
are unique $U',V'\in\langle X_\alpha\cup Y_\beta\rangle$ and a unique relation
$f'$ from the list {\rm (\ref{rab1}--\ref{rab3})} such that $\Phi(U'f'V')=UfV$ and
$U'f'V'\in M_{n,n_x}^{{\bf s},{\bf t}}$.
\end{lemma}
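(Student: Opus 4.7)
The plan is to construct the triple $(U',V',f')$ explicitly by ``lifting'' second indices position by position, and then read off both existence and uniqueness from this construction.

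First, I would observe that since $\Phi$ collapses every second index to $1$, the requirement $\Phi(U'f'V')=UfV$ forces $\Phi(U')=U$, $\Phi(V')=V$, and $\Phi(f')=f$. Thus the sequences of letter-types (whether each letter is an $x$- or a $y$-letter) and first indices of $U'$ and $V'$ are pinned down, as is the type of $f'$ (one of (\ref{rab1})--(\ref{rab3})) and its first index ($p$, $q$, or $r$). The only remaining degrees of freedom are the second indices of each letter of $U'$ and $V'$ and the two second indices of $f'$.

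Second, the key combinatorial observation is that for every monomial $\mu$ appearing in the expansion of $U'f'V'$, the ordered sequences of positions of $x$-letters and of $y$-letters in $\mu$ are the same. This is immediate when $f'$ is of type (\ref{rab1}) or (\ref{rab2}); for $f'$ of type (\ref{rab3}) the two middle letters of $\mu$ are either $xy$ or $yx$, but in either case exactly one $x$ and one $y$ lie there, with the middle $x$-letter being the $(u_x+1)$-st $x$-letter of $\mu$ (where $u_x$ is the number of $x$-letters in $U'$) and the middle $y$-letter being the $(u_y+1)$-st $y$-letter. In particular ${\cal S}_x(\mu)$ and ${\cal S}_y(\mu)$ are identical across all such $\mu$, consistent with (\ref{TT}).

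Third, the condition $U'f'V'\in M_{n,n_x}^{{\bf s},{\bf t}}$ then forces, for each $\mu$ in the expansion, the second index of the $i$-th $x$-letter of $\mu$ to equal $s_i$ and the second index of the $j$-th $y$-letter to equal $t_j$. Combined with the invariance in the previous paragraph, this uniquely determines the second index of every letter of $U'$, of $V'$, and of $f'$. Since the components of ${\bf s}$ lie in $\{1,\dots,\alpha\}$ and those of ${\bf t}$ lie in $\{1,\dots,\beta\}$, the resulting $U',V'$ belong to $\langle X_\alpha\cup Y_\beta\rangle$ and $f'$ is a legitimate relation from the list. Applying $\Phi$ then sends every second index back to $1$ and recovers $UfV$, giving existence; uniqueness follows because every step above was forced. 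The main (minor) point to be careful with will be the position-counting in the $f^{xy}$ case of the second paragraph, where the two orderings $xy$ and $yx$ of the middle block must be checked to leave the $x$- and $y$-position enumerations intact; once that is verified, the rest is bookkeeping.
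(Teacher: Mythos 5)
Your proposal is correct and follows essentially the same route as the paper: both construct $U'$, $V'$ and $f'$ by reassigning the second index of the $i$-th $x$-letter to $s_i$ and of the $j$-th $y$-letter to $t_j$ (the paper does this by writing out ${\cal S}_x$ and ${\cal S}_y$ of $U'$ and $V'$ explicitly and splitting into the three cases $f^{xx}$, $f^{yy}$, $f^{xy}$), and your observation that the $x$- and $y$-position patterns agree across all monomials in the expansion of $U'f'V'$ is exactly the content of the paper's remark (\ref{TT}). No gap.
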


\begin{proof} We can write
\begin{align*}
{\cal S}_x(U)&=x_{j_1,1}x_{j_2,1}\dots x_{j_a,1},
\\
{\cal S}_y(U)&=y_{k_1,1}y_{k_2,1}\dots y_{k_b,1},
\\
{\cal S}_x(V)&=x_{j_c,1}x_{j_{c+1},1}\dots x_{j_{n_x},1},
\\
{\cal S}_y(V)&=y_{k_d,1}y_{k_{d+1},1}\dots y_{k_{n-n_x},1}.
\end{align*}
Clearly, we can choose the (unique) $U'\in\langle X_\alpha\cup Y_\beta\rangle$
\begin{align*}
\Phi(U')&=U,
\\
{\cal S}_x(U')&=x_{j_1,s_1}x_{j_2,s_2}\dots x_{j_a,s_a},
\\
{\cal S}_y(U')&=y_{k_1,t_1}x_{k_2,t_2}\dots x_{y_b,t_b}.
\end{align*}
Clearly, $\deg U'=a+b$. Similarly, we can choose the (unique) $V'\in \langle X_\alpha\cup Y_\beta\rangle$
such that
\begin{align*}
\Phi(V')&=V,
\\
{\cal S}_x(V')&=x_{j_c,s_c}x_{j_{c+1},s_{c+1}}\dots x_{j_{n_x},s_{n_x}},
\\
{\cal S}_y(V)&=y_{k_d,t_d}y_{k_{d+1},t_{d+1}}\dots
y_{k_{n-n_x},t_{n-n_x}}.
\end{align*}
Obviously, $\deg V'=n-a-b-2$. If $f=f^{xx}_{p,1,1}$, we have $c=a+3$ and $d=b+1$ and
we set $f'=f^{xx}_{p,s_{a+1},s_{a+2}}$. If $f=f^{yy}_{q,1,1}$, we have
$c=a+1$ and $d=b+3$ and we set $f'=f^{yy}_{q,t_{b+1},t_{b+2}}$.
Finally, if $f=f^{xy}_{r,1,1}$, we have $c=a+2$, $d=b+2$ and we set
$f'=f^{xy}_{r,s_{a+1},s_{b+1}}$. In any case, $U'f'V'\in M_{n,n_x}^{{\bf s},{\bf t}}$
and $\Phi(U'f'V')=UfV$.
\end{proof}

\begin{lemma}\label{monid} Let $\alpha,\beta\in\Z_+$,
and $W\in\langle X_\alpha\cup Y_\beta\rangle$. Then
$W\in I_{\alpha,\beta}$ if and only if $\Phi(W)\in I_{1,1}$.
\end{lemma}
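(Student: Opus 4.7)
The plan is to reduce the equivalence to a single graded component $M_{n,n_x}^{{\bf s},{\bf t}}$ and then invoke Lemma~\ref{mut}.

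Because $W$ is a single word, it lies in exactly one space $M_{n,n_x}^{{\bf s},{\bf t}}$, and correspondingly $\Phi(W)$ lies in $M_{n,n_x}^{{\bf 1},{\bf 1}}$. So it suffices to show $W \in I_{\alpha,\beta}\cap M_{n,n_x}^{{\bf s},{\bf t}}$ if and only if $\Phi(W)\in I_{1,1}\cap M_{n,n_x}^{{\bf 1},{\bf 1}}$. To make this reduction legitimate, I would first record the direct sum decompositions
$$I_{\alpha,\beta}=\bigoplus_{n,n_x,{\bf s},{\bf t}}\bigl(I_{\alpha,\beta}\cap M_{n,n_x}^{{\bf s},{\bf t}}\bigr),\qquad I_{1,1}=\bigoplus_{n,n_x}\bigl(I_{1,1}\cap M_{n,n_x}^{{\bf 1},{\bf 1}}\bigr),$$
which follow from (\ref{TT}) applied to each spanning element $U'f'V'$ of the ideal, combined with the direct sum decomposition of the ambient free algebra into the $M$-spaces.

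The heart of the argument is to show that $\Phi$ restricts to a \emph{linear isomorphism} from $I_{\alpha,\beta}\cap M_{n,n_x}^{{\bf s},{\bf t}}$ onto $I_{1,1}\cap M_{n,n_x}^{{\bf 1},{\bf 1}}$. Injectivity is immediate from (\ref{isom}), since $\Phi$ is already injective on the ambient $M_{n,n_x}^{{\bf s},{\bf t}}$. The image lies in $I_{1,1}$ because $\Phi$ sends each defining relation of $I_{\alpha,\beta}$ to its $\alpha=\beta=1$ counterpart; for instance $\Phi(f^{xx}_{p,a,b})=f^{xx}_{p,1,1}$, and analogously for the $yy$ and $xy$ relations. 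Surjectivity is the point where Lemma~\ref{mut} does real work: given any spanning element $UfV$ of $I_{1,1}\cap M_{n,n_x}^{{\bf 1},{\bf 1}}$ with $U,V\in\langle X_1\cup Y_1\rangle$, the lemma furnishes a unique $U'f'V'\in M_{n,n_x}^{{\bf s},{\bf t}}$ with $\Phi(U'f'V')=UfV$, so the image contains a spanning set of the target and therefore the whole target.

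With the isomorphism established, both implications of the lemma follow at once: the graded-component conditions are equivalent, and by the reduction above these are in turn equivalent to $W\in I_{\alpha,\beta}$ and $\Phi(W)\in I_{1,1}$. The only nontrivial step is the surjectivity of the restricted $\Phi$, and that is precisely what Lemma~\ref{mut} delivers; the rest is bookkeeping with the direct sum decompositions. It is worth noting that the monomial hypothesis on $W$ is essential: for a general linear combination, different $({\bf s},{\bf t})$-components collapse to the same $M_{n,n_x}^{{\bf 1},{\bf 1}}$ under $\Phi$, and one cannot disentangle them from $\Phi(W)$ alone.
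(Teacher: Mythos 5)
Your proposal is correct and follows essentially the same route as the paper: the easy direction via $\Phi$ mapping each relation $f^{xx}_{p,a,b}$, $f^{yy}_{q,s,t}$, $f^{xy}_{r,a,s}$ to its $(1,1)$ counterpart, and the converse by using (\ref{TT}) to confine everything to a single $M_{n,n_x}^{{\bf s},{\bf t}}$, Lemma~\ref{mut} to lift each spanning element $UfV$ of $I_{1,1}$, and (\ref{isom}) to conclude by injectivity. Your packaging of the converse as surjectivity of the restricted map $\Phi\colon I_{\alpha,\beta}\cap M_{n,n_x}^{{\bf s},{\bf t}}\to I_{1,1}\cap M_{n,n_x}^{{\bf 1},{\bf 1}}$ is a slightly more careful rendering of the paper's lifting of a fixed expression $\Phi(W)=\sum\lambda_m U_mf_mV_m$ (it makes explicit the projection onto the relevant component, which the paper glosses over), but the underlying mechanism is identical.
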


\begin{proof} It is easy to see that
$\Phi(f^{xx}_{p,a,b})=f^{xx}_{p,1,1}$,
$\Phi(f^{yy}_{q,s,t})=f^{yy}_{q,1,1}$ and
$\Phi(f^{xy}_{r,a,s})=f^{xy}_{r,1,1}$ for all possible values of
$p$, $q$, $r$, $a$, $b$, $s$ and $t$, where the relations $f$ are
defined in (\ref{rab1}--\ref{rab3}). It follows that
$\Phi(I_{\alpha,\beta})=I_{1,1}$. Thus $\Phi(W)\in I_{1,1}$ if $W\in
I_{\alpha,\beta}$. It remains to show the converse. Assume that
$\Phi(W)\in I_{1,1}$. Since $W\in\langle X_\alpha\cup
Y_\beta\rangle$, $W$ belongs to (exactly) one of the spaces
$M_{n,n_x}^{{\bf s},{\bf t}}$. Note that $n=\deg W$, $n_x=\deg{\cal
S}_x(W)$, the components of ${\bf s}$ belong to $\{1,\dots,\alpha\}$
and the components of ${\bf t}$ belong to $\{1,\dots,\beta\}$. Since
$\Phi(W)$ is a degree $n$ monomial and belongs to $I_{1,1}$, we can
write
$$
\Phi(W)=\sum_{m=1}^N \lambda_m U_m f_m V_m,
$$
where $\lambda_m\in\K\setminus\{0\}$, $U_m,V_m\in \langle X_1\cup Y_1\rangle$
and $f_1,\dots,f_N$ is a string of
relations taken from (\ref{rab1}--\ref{rab3}) with $\alpha=\beta=1$
(repetitions are allowed).

By Lemma~\ref{mut}, there are $U_m',V_m'\in\langle X_\alpha\cup Y_\beta\rangle$
and relations $f_m'$ from the list (\ref{rab1}--\ref{rab3}) such that $\Phi(U_m'f_m'V_m')=U_mf_mV_m$
and $U_m'f_m'V_m'\in M_{n,n_x}^{{\bf s},{\bf t}}$ for each $m$. That
is, the above display yields
$$
\Phi\biggl(W-\sum_{m=1}^N \lambda_m U'_m f'_m V'_m\biggr)=0.
$$
Since $W\in M_{n,n_x}^{{\bf s},{\bf t}}$ and each $U'_m f'_m V'_m$ belongs to $M_{n,n_x}^{{\bf
s},{\bf t}}$, (\ref{isom}) implies that
$$
W=\sum_{m=1}^N \lambda_m U'_m f'_m V'_m.
$$
Since each $f'_m$ belongs to the ideal $I_{\alpha,\beta}$, we have $W\in
I_{\alpha,\beta}$.
\end{proof}

The following lemma is our key instrument.

\begin{lemma}\label{key} If $R^{(1,1)}$ is $n$-step nilpotent, then
$R^{(\alpha,\beta)}$ is $n$-step nilpotent for every
$\alpha,\beta\in\Z_+$.
\end{lemma}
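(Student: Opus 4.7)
The plan is to deduce Lemma~\ref{key} almost immediately from Lemma~\ref{monid}, which has already done the real work of transferring information between the algebras $R^{(1,1)}$ and $R^{(\alpha,\beta)}$. By Definition~\ref{kstep}, showing $R^{(\alpha,\beta)}$ is $n$-step nilpotent amounts to showing that every homogeneous degree $n$ element of $\K\langle X_\alpha\cup Y_\beta\rangle$ lies in the ideal $I_{\alpha,\beta}$. By linearity, it suffices to verify this for a monomial $W\in\langle X_\alpha\cup Y_\beta\rangle$ of degree $n$.

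Given such a $W$, I would apply the homomorphism $\Phi$ to obtain $\Phi(W)\in\langle X_1\cup Y_1\rangle$, which still has degree $n$ since $\deg\Phi(U)=\deg U$. Since $R^{(1,1)}$ is assumed to be $n$-step nilpotent, its degree~$n$ component vanishes, which means that every degree $n$ element of $\K\langle X_1\cup Y_1\rangle$ lies in $I_{1,1}$. In particular, $\Phi(W)\in I_{1,1}$. Lemma~\ref{monid} then yields the desired conclusion $W\in I_{\alpha,\beta}$.

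I do not anticipate any real obstacle: the genuine technical content sits in Lemmas~\ref{mut} and~\ref{monid}, and once those are available the current statement is only two or three lines of bookkeeping. The only minor point to double-check is the boundary behavior when $\alpha=0$ or $\beta=0$, in which case some of the generator families or relation families become empty; here the reduction via $\Phi$ still works verbatim (with the appropriate subset of the spaces $M_{n,n_x}^{\mathbf{s},\mathbf{t}}$), and the extremal case $\alpha=\beta=0$ is either excluded by the definition of $R^{(\alpha,\beta)}$ or is trivial.
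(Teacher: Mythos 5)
Your proposal is correct and follows essentially the same route as the paper: reduce to a degree~$n$ monomial $W$, note $\Phi(W)$ is a degree~$n$ monomial in $\langle X_1\cup Y_1\rangle$ hence lies in $I_{1,1}$ by the nilpotency of $R^{(1,1)}$, and conclude $W\in I_{\alpha,\beta}$ by Lemma~\ref{monid}. The paper's proof is the same two-line deduction (with an inessential ``without loss of generality $n\geq 3$'' remark), so there is nothing to add.
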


\begin{proof} Without loss of generality we can assume that $n\geq 3$. Assume
that $R^{(1,1)}$ is $n$-step nilpotent. Let $W$ be a degree $n$
monomial in the alphabet $X_\alpha\cup Y_\beta$. The proof will be
complete if we verify that $W$ belongs to the ideal
$I_{\alpha,\beta}$ generated by the relations
(\ref{rab1}--\ref{rab3}). Clearly, $\Phi(W)$ is a degree $n$
monomial in the alphabet $X_1\cup Y_1$. Since $R^{(1,1)}$ is
$n$-step nilpotent $\Phi(W)\in I_{1,1}$. By Lemma~\ref{monid}, $W\in
I_{\alpha,\beta}$, which completes the proof.
\end{proof}

Note that $R^{(\alpha,\beta)}$ has exactly $\alpha g_x+\beta g_y$
generators and $\alpha^2r_{xx}+\beta^2r_{yy}+\alpha\beta r_{xy}$
relations. This observation and the shape of the relations in
$R^{(1,1)}$ allow us to give the following reformulation of
Lemma~\ref{key}. As usual, for subsets $A$ and $B$ of an algebra
$R$, $A\cdot B$ stands for $\spann\{ab:a\in A,\,b\in B\}$.

\begin{corollary}\label{gjsdfg} Assume that there exists a quadratic
algebra $R$ given by the $(g_x+g_y)$-element set $X\cup Y$ of
generators with $|X|=g_x$ and $|Y|=g_y$, $r_{xx}$ relations
belonging to $X\cdot X$, $r_{yy}$ relations belonging to $Y\cdot Y$
and $r_{xy}$ relations belonging to $X\cdot Y+Y\cdot X$ such that
$R$ is $n$-step nilpotent. Then for every $\alpha,\beta\in\Z_+$,
there exists a quadratic $\K$-algebra $R^{(\alpha,\beta)}$ given by
$\alpha g_x+\beta g_y$ generators and
$\alpha^2r_{xx}+\beta^2r_{yy}+\alpha\beta r_{xy}$ relations such
that $R^{(\alpha,\beta)}$ is $n$-step nilpotent.
\end{corollary}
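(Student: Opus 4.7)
The plan is to identify the hypothesized algebra $R$ with $R^{(1,1)}$ for a suitable choice of structural coefficients, and then invoke Lemma~\ref{key}.

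First, I would enumerate $X=\{x_1,\dots,x_{g_x}\}$ and $Y=\{y_1,\dots,y_{g_y}\}$, and fix the three blocks of relations of $R$ as $f^{xx}_p$ for $1\leq p\leq r_{xx}$ (lying in $X\cdot X$), $f^{yy}_q$ for $1\leq q\leq r_{yy}$ (lying in $Y\cdot Y$) and $f^{xy}_r$ for $1\leq r\leq r_{xy}$ (lying in $X\cdot Y+Y\cdot X$). I can then expand each one uniquely as $f^{xx}_p=\sum_{j,l} c^{xx}_{p,j,l}\,x_j x_l$, $f^{yy}_q=\sum_{k,w} c^{yy}_{q,k,w}\,y_k y_w$, and $f^{xy}_r=\sum_{j,k}\bigl(c^{xy}_{r,j,k}\,x_j y_k+c^{yx}_{r,k,j}\,y_k x_j\bigr)$, thus reading off the scalars required by the construction of Section~2. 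Taking $\alpha=\beta=1$ there and identifying $x_{j,1}\leftrightarrow x_j$, $y_{k,1}\leftrightarrow y_k$, the relations (\ref{rab1}--\ref{rab3}) reproduce exactly the relations of $R$, so $R^{(1,1)}\cong R$ and by hypothesis $R^{(1,1)}$ is $n$-step nilpotent.

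Second, Lemma~\ref{key} applied to these data immediately yields that $R^{(\alpha,\beta)}$ is $n$-step nilpotent for every $\alpha,\beta\in\Z_+$. It remains only to count: the generator set $X_\alpha\cup Y_\beta$ has cardinality $\alpha g_x+\beta g_y$, while the defining relations $f^{xx}_{p,a,b}$, $f^{yy}_{q,s,t}$, $f^{xy}_{r,a,s}$ are indexed over $\{1,\dots,r_{xx}\}\times\{1,\dots,\alpha\}^2$, $\{1,\dots,r_{yy}\}\times\{1,\dots,\beta\}^2$ and $\{1,\dots,r_{xy}\}\times\{1,\dots,\alpha\}\times\{1,\dots,\beta\}$ respectively, giving the claimed total $\alpha^2 r_{xx}+\beta^2 r_{yy}+\alpha\beta r_{xy}$.

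There is no genuine obstacle here: all the substantive content resides in Lemmas~\ref{mut}, \ref{monid} and~\ref{key}, and the corollary is a clean reformulation. The only mild point to watch is that the hypothesis that each relation of $R$ lies in exactly one of $X\cdot X$, $Y\cdot Y$, or $X\cdot Y+Y\cdot X$ is precisely what is needed to read off the coefficients $c^{xx}$, $c^{yy}$, $c^{xy}$, $c^{yx}$ without any mixing between the three blocks, so that the abstract framework of Section~2 applies verbatim.
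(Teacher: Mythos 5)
Your proposal is correct and matches the paper's own treatment: the paper likewise regards the corollary as a direct reformulation of Lemma~\ref{key}, obtained by reading off the coefficients of the three blocks of relations of $R$ to realize it as $R^{(1,1)}$ and then counting the generators and relations of $R^{(\alpha,\beta)}$. No further comment is needed.
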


\section{Two specific algebras}

\begin{lemma}\label{3-1} Let $\K$ be an arbitrary field and $R_{3,1}$ be
the $\K$-algebra given by $4$ generators $a,b,c,x$ and $6$ relations
$cb-bc+aa$, $bb+aa-ac$, $cc-ba$, $xx$, $ax+bx-xa-xb$ and
$bx+cx-xa-xb-xc$. Then $R_{3,1}$ is $5$-step nilpotent.
\end{lemma}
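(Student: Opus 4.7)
The plan is to prove five-step nilpotency of $R_{3,1}$ by a direct noncommutative Gr\"obner basis argument (Bergman diamond lemma). Equivalently, one must show that every monomial of length five in the alphabet $\{a,b,c,x\}$ belongs to the ideal $I$ generated by the six defining relations.

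First I would fix a compatible monomial order, for example the degree-lexicographic order induced by $c>b>a>x$. Under this order the leading monomials of the six defining relations are $cb,\ b^2,\ c^2,\ x^2,\ bx,\ cx$, giving the initial rewriting rules $cb\to bc-a^2$, $b^2\to ac-a^2$, $c^2\to ba$, $x^2\to 0$, $bx\to xa+xb-ax$, and $cx\to ax+xc$ (the last obtained by first killing the leading $cx$ via the relation $bx+cx-xa-xb-xc$ and then substituting the rule for $bx$). A direct count shows that the length-five words avoiding these six subwords are far too numerous, so the Gr\"obner basis must be completed before nilpotency can be read off the leading monomials.

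Next I would run Bergman's completion procedure up to total degree five. Already the degree-three overlap ambiguities among the three purely $abc$-relations (on the monomials $cbb$, $b^3$, $c^2b$, $c^3$) force new cubic rules with leading monomials such as $cac$, $bac$, $ca^2$, and $bca$. The mixed overlaps with the $x$-relations (on $cbx$, $b^2x$, $c^2x$, $bx^2$, $cx^2$, $x^3$) and the further overlaps produced by the new cubic rules must then be processed, each potentially giving rise to additional reductions. I would iterate this until no new rule of degree $\leq 5$ is produced. Finally, using the completed set of rewriting rules, I would verify the key conclusion: every length-five monomial in $\{a,b,c,x\}$ contains a leading monomial of some rule as a subword, hence reduces to $0$ modulo $I$. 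An equivalent organized bookkeeping is to tabulate the number of surviving normal forms in degrees $0,1,2,3,4,5$ and check that this sequence agrees with the Golod--Shafarevich-optimal dimensions $1,4,10,16,4,0$; combined with the GS lower bound this pins $H_{R_{3,1}}$ exactly and in particular forces $R_5=0$.

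The main obstacle is the bookkeeping of the completion step. Because $R_{3,1}$ sits exactly on the Golod--Shafarevich boundary (the formal series $1/(1-4t+6t^2)$ first becomes negative in degree five), the cancellations forcing $R_5=0$ depend delicately on the specific scalars in the six defining relations; a generic perturbation of those coefficients would destroy the property. Consequently, the proof cannot be shortcut by a purely combinatorial argument on leading monomials alone: one has to carefully introduce each new rule in degrees three and four, recompute overlap ambiguities as they arise, and verify that the reduction of every length-five word really terminates at zero.
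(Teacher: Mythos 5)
Your overall strategy --- complete the six relations to a noncommutative Gr\"obner basis and check that every word of length $5$ contains a leading monomial of the completed system --- is exactly the paper's (the paper's proof is likewise computer-assisted, via Bergman, though with the opposite ordering $a<b<c<x$, under which the leading monomials of the mixed relations are $xb$ and $xc$ rather than $bx$ and $cx$). Two points, however, need repair. First, your proposed ``equivalent bookkeeping check'' is wrong: $R_{3,1}$ does \emph{not} attain the Golod--Shafarevich series coefficientwise. The inverse series $(1-4t+6t^2)^{-1}$ has coefficients $1,4,10,16,4,-80,\dots$, but the actual Hilbert series is $H_{R_{3,1}}(t)=1+4t+10t^2+18t^3+21t^4$; Theorem~GS only asserts $H_R(t)(1-4t+6t^2)\geq 1$, which permits $18>16$ and $21>4$. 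If you insisted on the normal-form counts $1,4,10,16,4,0$, the computation would appear to fail even though the algebra is $5$-step nilpotent. The correct (and only needed) check is the one you state first: all $4^5$ degree-$5$ words reduce to zero.

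Second, and more seriously, the lemma is asserted for an \emph{arbitrary} field $\K$, and a Gr\"obner basis computation is characteristic-dependent: completion requires dividing by leading coefficients, and the dimensions of the graded components of an algebra presented over $\Z$ can only increase upon reduction modulo a prime, so nilpotency over $\Q$ does not by itself imply nilpotency over $\Z/p\Z$. Running the completion over $\Q$ establishes the claim only in characteristic $0$ and in those characteristics $p$ for which $p$ divides no leading coefficient of the completed basis. The paper closes this gap by observing that in characteristic $0$ every leading coefficient of the computed Gr\"obner basis is of the form $\pm2^{j}$ (so reduction modulo any odd prime preserves the basis and its leading monomials, hence the Hilbert series) and by running the computation a second time in characteristic $2$. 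As written, your argument proves the lemma only over fields of characteristic $0$; you must add a step of this kind to cover all characteristics.
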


\begin{lemma}\label{3-2} Let $\K$ be an arbitrary field and $R_{3,2}$ be
the $\K$-algebra given by $5$ generators $a,b,c,x,y$ and $9$
relations $cb-bc+aa$, $bb+aa-ac$, $cc-ba$, $yy-xx$, $yx$,
$ay+bx+cy-xb-yc$, $ax+by-xa-yb$, $cy+bx-ya-xb-yc$ and
$ax+by+cx-ya-xb$. Then $R_{3,2}$ is $5$-step nilpotent.
\end{lemma}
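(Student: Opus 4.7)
The goal is to show $(R_{3,2})_5=\{0\}$, i.e.\ that every length-$5$ monomial in the letters $a,b,c,x,y$ belongs to the two-sided ideal $I\subset\K\langle a,b,c,x,y\rangle$ generated by the nine quadratic relations. My plan is to produce a non-commutative Gr\"obner basis of $I$ (at least up through degree $5$) with respect to a suitable admissible monomial order, and then verify combinatorially that every length-$5$ word contains a leading monomial of that basis as a contiguous subword.

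I would fix the degree-lexicographic order with $a>b>c>x>y$. The leading monomials of the nine given generators of $I$ are then $cb,bb,cc$ (from 1--3); $yy,yx$ (from 4--5); and $ay,ax,cy,ax$ (from 6--9, respectively). Since 7 and 9 share the leading term $ax$, the element $f_7-f_9=-cx+ya+xb-xa-yb\in I$ must be added to the basis, contributing the new leading monomial $cx$. From there one computes all S-polynomial overlaps (for instance $cb$ with $bb$, $ay$ with $yy$ or $yx$, $cx$ with any rule starting in $x$ that appears, and so on), reduces each against the current basis, and enlarges the basis with any nonzero remainders, iterating until closure. Elements whose leading monomial has degree $\ge 6$ may be discarded since they are irrelevant for the $5$-step nilpotency claim.

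The restrictions of relations 1--3 to $\K\langle a,b,c\rangle$ admit a small Gr\"obner basis whose normal words are easily enumerated (and whose additional elements are already cubic). The substantive new work is controlling the interplay of the mixed relations 6--9 (together with $yy-xx$ and $yx$) with this subalgebra. The expected outcome is that the final list of quadratic leading monomials comprises the six mixed products $ax,ay,bx,by,cx,cy$, the three $abc$-leading terms $cb,bb,cc$, and the $y$-relations $yy,yx$, possibly supplemented by a few cubic leading monomials arising from reductions. Once that list stabilizes, checking that every length-$5$ word in $\{a,b,c,x,y\}$ contains one as a contiguous subword is a finite combinatorial verification.

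The main obstacle is the S-polynomial bookkeeping: the mixed relations 6--9 are long (up to five terms each), their overlaps with the cubic-producing $abc$-relations generate intricate cascades, and one must carefully track which new elements keep leading monomials of degree $\le 5$. Because of the length of the calculation, symbolic computation is a natural tool, though the structured pattern suggested above indicates that a clean written verification remains achievable.
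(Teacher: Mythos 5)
Your overall strategy---compute a (degree-truncated) Gr\"obner basis of the ideal of relations and check that no degree-$5$ word survives as a normal word---is exactly the route the paper takes; the authors run the Bergman system with the deg-lex order induced by $a<b<c<x<y$ and read off the Hilbert series $1+5t+16t^2+35t^3+43t^4$ from the resulting basis (which has $102$ elements, with leading monomials up to degree $5$). However, as written your proposal has two genuine gaps. First, it is a plan rather than a proof: the ``expected outcome'' for the set of leading monomials is asserted, not derived, and in fact it is already inconsistent with the order you fixed (with $a>b>c>x>y$ the leading monomial of $cb-bc+aa$ is $aa$, not $cb$, and that of $cy+bx-ya-xb-yc$ is $bx$, not $cy$). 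The actual computation is long---the completed basis in the paper's order contains dozens of cubic, quartic and quintic elements---so until the S-polynomial closure is actually exhibited (by hand or by a verified computer run), the lemma is not established.

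Second, and more substantively, the lemma is stated for an \emph{arbitrary} field $\K$, and a single Gr\"obner basis computation does not deliver that: Buchberger reduction divides by leading coefficients, so both the basis and the resulting Hilbert series can a priori depend on ${\rm char}\,\K$. Your proposal never addresses this. The paper closes the gap by running the computation twice, in characteristic $0$ and in characteristic $2$, and then observing that every leading coefficient in the characteristic-$0$ basis is $\pm 2^{j}$; hence for any odd prime $p$ the reduction of that basis modulo $p$ is still a Gr\"obner basis with the same leading monomials, so the Hilbert series---which depends only on the leading monomials---is the same in every characteristic. You would need to add an argument of this kind (or redo the computation over $\Z$ with denominators controlled) to get the statement for all $\K$.
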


Our proof of Lemmas~\ref{3-1} and~\ref{3-2} is computer assisted. We order the
generators by $a<b<c<x$ for $R_{3,1}$ and by $a<b<c<x<y$ for $R_{3,2}$ and use
the degree lexicographical ordering on the monomials. We have used
the program Bergman \cite{berg} developed by J\"orgen Backelin
(Stockholm University). The software allows to obtain the Gr\"obner
basis and the Hilbert series of an associative algebra given by
generators and homogeneous relations. We list the (finite) Gr\"obner bases of
$R_{3,1}$ and of $R_{3,2}$ in the case char$\,\K=0$ in the appendix. Note that the
elements of the Gr\"obner bases are given as members of $\Z\langle a,b,c,x\rangle$ for $R_{3,1}$
and as members of $\Z\langle a,b,c,x,y\rangle$ for $R_{3,2}$. For each of the algebras $R_{3,1}$
and $R_{3,2}$ we have run the program twice specifying ${\rm char}\,\K=0$ for the first run
and ${\rm char}\,\K=2$ for the second. The reason for this will be apparent straight away.

Right now we point out only the relevant consequences of the output:
\begin{align}\label{fof1}
&\begin{array}{l}\text{The Hilbert series of $R_{3,1}$ and $R_{3,2}$ in
the case ${\rm char}\,\K\in\{0,2\}$ are}\\
\text{$H_{R_{3,1}}(t)=1+4t+10t^2+18t^3+21t^4$ and
$H_{R_{3,2}}(t)=1+5t+16t^2+35t^3+43t^4$}.\end{array}
\\
\label{fof2}
&\begin{array}{l}\text{In the case ${\rm char}\,\K=0$, the leading coefficients of all the members} \\
\text{of the Gr\"obner bases of $R_{3,1}$ and $R_{3,2}$ are of the form $\pm 2^j$ for $j\in\Z_+$.}\end{array}
\end{align}

{\bf Now we are ready to prove Lemmas~\ref{3-1} and~\ref{3-2}.} \ Let $p$ be an odd prime number.
By (\ref{fof2}) none of the leading coefficients of the members $g_j$ of the
characteristic 0 Gr\"obner bases of $R_{3,1}$ and $R_{3,2}$ is a multiple of $p$. It follows that if we
replace all the coefficients $c$ of each $g_j\in \Z\langle a,b,c,x,y\rangle$ by the corresponding cosets
$c+p\Z\in \Z/p\Z$, we obtain Gr\"obner bases of $R_{3,1}$ and $R_{3,2}$ in the case ${\rm char}\,\K=p$ and the
leading monomials are the same as in the case ${\rm char}\,\K=0$. Since the Hilbert series is uniquely
determined by the set of leading monomials of the Gr\"obner basis, the Hilbert series of $R_{3,1}$ and $R_{3,2}$
in the case ${\rm char}\,\K=p$ are the same as in the case ${\rm char}\,\K=0$. Looking at (\ref{fof1}),
we see that
$$
\begin{array}{l}\text{For every field $\K$,
the Hilbert series of $R_{3,1}$ and $R_{3,2}$ are}\\
\text{$H_{R_{3,1}}(t)=1+4t+10t^2+18t^3+21t^4$ and
$H_{R_{3,2}}(t)=1+5t+16t^2+35t^3+43t^4$}.\end{array}
$$
It immediately follows that $R_{3,1}$ and $R_{3,2}$ are 5-step nilpotent, which completes the proof of
Lemmas~\ref{3-1} and~\ref{3-2}.

\section{Proof of Theorem~\ref{main}}

In this section we derive Theorem~\ref{main} from
Corollary~\ref{gjsdfg} and Lemmas~\ref{3-1} and~\ref{3-2}.

By Lemma~\ref{3-1}, there exists a quadratic algebra $R_{3,1}$ given
by the set $X\cup Y$ of generators with $X=\{a,b,c\}$ and $Y=\{x\}$,
$3$ relations ($cb-bc+aa$, $bb+aa-ac$ and $cc-ba$) from $X\cdot X$,
$1$ relation (being $xx$) from $Y\cdot Y$ and $2$ relations
($ax+bx-xa-xb$ and $bx+cx-xa-xb-xc$) from $X\cdot Y+Y\cdot X$ such
that $R_{3,1}$ is $5$-step nilpotent. By Corollary~\ref{gjsdfg} with
$\alpha\in\N$ and $\beta=0$, we see that
\begin{equation}\label{3+0}
\begin{array}{l}
\text{for every $\alpha\in\N$, there is a quadratic $\K$-algebra
$R_{3\alpha}$ given by $3\alpha$ generators}
\\
\text{and $3\alpha^2=\bigl\lceil\frac{(3\alpha)^2}{3}\bigr\rceil$
relations such that $R_{3\alpha}$ is $5$-step nilpotent.}
\end{array}
\end{equation}
By Corollary~\ref{gjsdfg} applied to the same algebra $R_{3,1}$ with $\alpha\in\Z_+$ and $\beta=1$,
we see that
\begin{equation}\label{3+1}
\begin{array}{l}
\text{for every $\alpha\in\Z_+$, there is a quadratic $\K$-algebra
$R_{3\alpha+1}$ given by $3\alpha+1$ generators}
\\
\text{and
$3\alpha^2+2\alpha+1=\bigl\lceil\frac{(3\alpha+1)^2}{3}\bigr\rceil$
relations such that $R_{3\alpha+1}$ is $5$-step nilpotent.}
\end{array}
\end{equation}

By Lemma~\ref{3-2}, there exists a quadratic algebra $R_{3,2}$ given
by the set $X\cup Y$ of generators with $X=\{a,b,c\}$ and
$Y=\{x,y\}$, $3$ relations ($cb-bc+aa$, $bb+aa-ac$ and $cc-ba$) from
$X\cdot X$, $2$ relation ($yy-xx$ and $yx$) from $Y\cdot Y$ and $4$
relations ($ay+bx+cy-xb-yc$, $ax+by-xa-yb$, $cy+bx-ya-xb-yc$ and
$ax+by+cx-ya-xb$) from $X\cdot Y+Y\cdot X$ such that $R$ is $5$-step
nilpotent. By Corollary~\ref{gjsdfg} with $\alpha\in\Z_+$ and
$\beta=1$, we see that
\begin{equation}\label{3+2}
\begin{array}{l}
\text{for every $\alpha\in\Z_+$, there is a quadratic $\K$-algebra
$R_{3\alpha+2}$ given by $3\alpha+2$ generators}
\\
\text{and
$3\alpha^2+4\alpha+2=\bigl\lceil\frac{(3\alpha+2)^2}{3}\bigr\rceil$
relations such that $R_{3\alpha+2}$ is $5$-step nilpotent.}
\end{array}
\end{equation}

The formulae (\ref{3+0}--\ref{3+2}) show that for every $n\in\N$,
there is a quadratic $\K$-algebra $R_{n}$ given by $n$ generators
and $\bigl\lceil\frac{n^2}{3}\bigr\rceil$ relations such that $R_n$
is $5$-step nilpotent. This completes the proof of
Theorem~\ref{main}.

\section{Concluding remarks and open questions}

We start by reiterating the message that the optimality of Corollary~GS for $k\notin\{2,3,5\}$ remains an open question.

\begin{question}\label{q1} Let $k$ be an integer such that $k=4$ or $k\geq 6$. Is it true that for each $n\in\N$, there
is a $k$-step nilpotent quadratic algebra given by $n$ generators and $\lceil\phi_kn^2\rceil$ relations$?$
\end{question}

Another consequence of Theorem~GS is the following corollary.

\begin{corgs1} A quadratic algebra given by $n$ generators and $d\leq\frac{n^2}{4}$ relations is infinite
dimensional.
\end{corgs1}

The question of the optimality of Corollary~GS1 remains open as well.

\begin{question}\label{q2} Let $n\in\N$ and $d$ be an integer strictly greater than $\frac{n^2}{4}$.
Is it true that there is a finite dimensional quadratic algebra given by $n$ generators and $d$ relations$?$
\end{question}

Note that Wisliceny \cite{wis} proved that Corollary~GS1 is asymptotically optimal.
Namely, he proved that for every $n\in\N$ there is a semigroup quadratic algebra (every relation is either a monomial
of degree 2 or a difference of 2 monomials of degree 2) given by $n$ generators and $\bigl\lceil\frac{n^2+2n}{4}\bigr\rceil$
relations. The authors \cite{ns} proved that in the class of semigroup quadratic algebras with $n$ generators, the minimal
number of relations required for the algebra to be finite dimensional is exactly the first integer strictly greater than
$\frac{n^2+n}{4}$. In particular, this roughly halves the gap between the $\frac{n^2}{4}$ of Corollary~GS1 and
about $\frac{n^2+2n}{4}$ of Wisliceny. As it is observed in \cite{ns}, for $k\geq 5$, the number of semigroup quadratic
relations on $n\geq 3$ generators required for $k$-step nilpotency is strictly greater than the number of general quadratic
relations. Due to Anick \cite{ani1}, these numbers are the same for $k=3$. The case $k=4$ remains a mystery, which leads to
the following question.

\begin{question}\label{q3} Let $n\in\N$. Is it true that there is a $4$-step nilpotent
semigroup quadratic algebra given by $n$ generators and $\lceil\phi_4n^2\rceil$ relations$?$
\end{question}

To make the above question more appealing, we make the following observation.

\begin{proposition}\label{4st} The answer to Question~$\ref{q3}$ is affirmative for $n\leq 5$.
\end{proposition}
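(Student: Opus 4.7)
The plan is to handle the five values $n\in\{1,2,3,4,5\}$ individually, exhibiting in each case an explicit semigroup quadratic $\K$-algebra on $n$ generators with exactly $\lceil\phi_4 n^2\rceil$ relations, namely $1,2,4,7,10$ relations respectively, and verifying that it is $4$-step nilpotent. The two smallest cases can be done by hand: $n=1$ is settled by $\K\langle x\rangle/(x^2)$, which is $2$-step nilpotent. For $n=2$, I would take, for instance, $R=\K\langle x,y\rangle/(xy,\,x^2-y^2)$ and compute its reduced Gr\"obner basis directly in the degree-lex order with $x<y$; the successive S-polynomials yield the additional Gr\"obner elements $x^3$ and $yx^2$, after which the basis stabilises. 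The set of normal monomials is then $\{1,x,y,x^2,yx\}$ over any field, so $H_R(t)=1+2t+2t^2$ and $R$ is in fact $3$-step, hence $4$-step, nilpotent.

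For $n\in\{3,4,5\}$ the examples are likely too large to find or to verify by hand, and I would proceed exactly in the spirit of the proof of Lemmas~\ref{3-1} and~\ref{3-2}: conduct a systematic computer search over admissible sets of $4$, $7$, and $10$ semigroup quadratic relations (each being either a degree $2$ monomial or a difference of two degree $2$ monomials) on $3$, $4$, and $5$ generators, retaining any candidate whose reduced Gr\"obner basis with respect to the degree-lex order terminates and produces only finitely many normal monomials, all of degree at most $3$. For each successful candidate I would record the finite Gr\"obner basis produced by Bergman and read off the Hilbert series. Characteristic-independence is handled by the same device used for Lemmas~\ref{3-1} and~\ref{3-2}: verify that the leading coefficients of the integral Gr\"obner basis lie in $\{\pm 2^{j}:j\in\Z_+\}$, so that reducing modulo any odd prime preserves the leading monomials and hence the Hilbert series, and then run the computation separately in characteristic $2$.

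The main obstacle is \emph{existence} of the relation sets, rather than their verification. As recalled just before the proposition, for $k\ge 5$ the minimum number of semigroup quadratic relations required for $k$-step nilpotency is strictly greater than the corresponding minimum in the general quadratic class, so there is no structural reason to expect the bound $\lceil\phi_4 n^2\rceil$ from Corollary~GS to be attained in the restricted semigroup setting for $k=4$. Moreover, since $\phi_4=\frac{3-\sqrt 5}{2}$ is irrational, no uniform scaling construction in the spirit of Corollary~\ref{gjsdfg} can realise the ceiling simultaneously for every $n$: each of the three non-trivial cases must be treated by its own ad hoc example, discovered only by a dedicated search. The content of the proposition is precisely that, despite these two obstacles, such examples do in fact exist for $n\le 5$.
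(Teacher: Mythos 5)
There is a genuine gap: your argument never actually produces the algebras for $n=3,4,5$. The proposition is an existence statement, and for these three values you only describe a search procedure (``conduct a systematic computer search\dots retaining any candidate\dots''), which you yourself acknowledge when you write that the main obstacle is existence and that the content of the proposition is precisely that such examples exist. A proof must exhibit the witnesses, and this is exactly what the paper does: it writes down explicit presentations, e.g.\ $R_3=\K\langle a,b,c\rangle/{\rm Id}(c^2{-}ba,\,cb{-}a^2,\,b^2,\,ca)$ with $4$ relations, $R_4$ with $7$ relations and $R_5$ with $10$ relations, and then verifies $4$-step nilpotency ($R_3$ by hand --- it is small enough, contrary to your expectation --- and $R_4$, $R_5$ via Bergman, obtaining $H_{R_4}(t)=1+4t+9t^2+8t^3$ and $H_{R_5}(t)=1+5t+15t^2+25t^3$). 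Your treatment of $n=1$ and $n=2$ is fine and matches the paper's, but without the three explicit relation sets the proposal is a research plan rather than a proof.

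Two smaller points. First, your mechanism for characteristic-independence (checking that leading coefficients are powers of $2$ and rerunning in characteristic $2$) is unnecessary here: for semigroup quadratic algebras all relations, and hence all elements of the reduced Gr\"obner basis, have coefficients $\pm1$, so the Hilbert series does not depend on the underlying field; the paper simply invokes this and runs the computation once in characteristic $0$. Second, your structural remarks (that $\phi_4$ is irrational, so no scaling construction in the spirit of Corollary~\ref{gjsdfg} applies, and that for $k\ge5$ the semigroup bound exceeds the general quadratic bound) are accurate and explain why the examples must be found ad hoc, but they do not substitute for exhibiting them.
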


\begin{proof} Note that for $1\leq n\leq 5$, the numbers $\lceil\phi_4n^2\rceil$ are $1$, $2$, $4$, $7$ and $10$.
Consider the algebras $R_n$ for $1\leq n\leq 5$ defined by
\begin{align*}
R_1&=\K\langle a\rangle/{\rm Id}(a^2);
\\
R_2&=\K\langle a,b\rangle/{\rm Id}(b^2{-}a^2,ba);
\\
R_3&=\K\langle a,b,c\rangle/{\rm Id}(c^2{-}ba, cb{-}a^2,b^2,ca);
\\
R_4&=\K\langle a,b,c,d\rangle/{\rm Id}(d^2{-}ca,dc{-}ab,db{-}a^2,da,cd{-}b^2,c^2{-}ba,cb{-}bc);
\\
R_5&=\K\langle a,b,c,d,e\rangle/{\rm Id}(de{-}eb,ce{-}db,e^2{-}da,ed{-}cb,d^2{-}b^2,cd{-}ab,ec{-}ca,dc{-}ba,c^2{-}a^2,ea).
\end{align*}
Then each $R_n$ for $1\leq n\leq 5$ is given by $n$ generators and $\lceil\phi_4n^2\rceil$ semigroup quadratic relations.
The proof will be complete if we show that each $R_n$ is $4$-step nilpotent. The fact that $R_1$, $R_2$ and $R_3$
are 4-step nilpotent is easily verifiable by hand  (in fact $R_1$ is 2-step nilpotent and $R_2$ is 3-step nilpotent).
We have again used BERGMAN to verify that $R_4$ and $R_5$ are 4-step nilpotent. Since for a semigroup algebra the Hilbert series does not depend on the underlying field,
we can run the program to obtain the Hilbert series of $R_n$ choosing an arbitrary characteristic of $\K$ (we have opted for
char$\,\K=0$). This gives the Hilbert series of $R_n$:
\begin{align*}
H_{R_1}(t)&=1+t;
\\
H_{R_2}(t)&=1+2t+2t^2;
\\
H_{R_3}(t)&=1+3t+5t^2+4t^3;
\\
H_{R_4}(t)&=1+4t+9t^2+8t^3;
\\
H_{R_5}(t)&=1+5t+15t^2+25t^3.
\end{align*}
Thus each $R_n$ for $1\leq n\leq 5$ is $4$-step nilpotent.
\end{proof}

We have to confess that the algebras $R_4$  and $R_5$ in the above proof are obtained via computer assisted guesswork.

\section{Acknowledgements}

   This research is funded by the ERC grant 320974. We also would like to acknowledge hospitality and support of IHES and MPIM, Bonn, where part of this work has been done. Earlier stages of this research was supported by the project PUT9038.


\small\rm

\section{Appendix: The Gr\"obner bases of $R_{3,1}$ and $R_{3,2}$}

{\bf Table 1.} \sl If ${\rm char}\,\K=0$, the Gr\"obner basis of $R_{3,1}$ is \rm
\small
\begin{align*}
&g_1=b^2{-}ac{+}a^2;\ \ g_2=cb{-}bc{+}a^2;\ \ g_3=c^2{-}ba;\ \ g_4={-}xb{-}xa{+}bx{+}ax;\ \ g_5={-}xc{+}cx{-}ax;
\\
&g_6=x^2;\ \ g_7=bac{-}ba^2{-}abc{+}a^2b{+}a^3;\ \ g_8=bca{-}ba^2{-}abc{+}a^2b;\ \ g_9={-}ca^2{-}bab{+}aca{-}a^2c{-}a^3;
\\
&g_{10}=cac{+}bab{+}ba^2{-}aca{-}aba{+}2a^2c{+}a^2b{+}a^3;\ \ g_{11}=xa^2{-}cax{-}bxa{+}bax{-}axa{-}acx{+}a^2x;
\\
&g_{12}={-}xab{-}cxa{+}3cax{+}bxa{+}2axa{+}2acx{-}3a^2x;\ \ g_{13}={-}xac{+}cxa{-}2cax{-}bxa{-}2axa{+}abx{+}2a^2x;\ \ g_{14}=xax;
\\
&g_{15}=ba^3{-}abab{-}2aba^2{+}2a^2ca{+}a^2ba{-}3a^3c{-}a^3b{-}a^4;
\\
&g_{16}={-}ba^2b{+}acab{+}3abab{+}5aba^2{-}5a^2ca{+}a^2bc{-}3a^2ba{+}6a^3c{+}5a^4;
\\
&g_{17}={-}ba^2c{-}acab{-}abab{-}2aba^2{+}2a^2ca{+}a^2ba{-}3a^3c{-}2a^4;
\\
&g_{18}=baba{+}acab{+}abab{+}2aba^2{-}3a^2ca{+}a^2bc{-}a^2ba{+}4a^3c{+}3a^4;
\\
&g_{19}=babc{-}acab{-}4abab{-}6aba^2{+}7a^2ca{-}a^2bc{+}3a^2ba{-}7a^3c{-}7a^4;
\\
&g_{20}={-}{\bf8}bcxa{+}15baxa{-}20babx{+}ba^2x{-}5acxa{+}32acax{+}4abxa{+}4abcx{-}29abax{+}8a^2xa{+}24a^2cx{+}5a^2bx{-}31a^3x;
\\
&g_{21}=caba{+}4abab{+}4aba^2{-}6a^2ca{+}a^2bc{-}a^2ba{+}7a^3c{+}2a^3b{+}7a^4;\ \ g_{22}=cabc{-}2acab{-}aba^2{+}2a^2ca{+}2a^2bc{-}2a^3c{-}2a^4;
\\
&g_{23}={\bf 8}cabx{-}3baxa{+}12babx{+}19ba^2x{+}17acxa{+}8acax{+}12abxa{+}44abcx{+}9abax{-}49a^2bx
\\
&\quad\qquad{-}
37a^3x{-}8caxa{+}baxa{-}4babx{-}9ba^2x{-}11acxa{-}4abxa{-}12abcx{+}5abax{+}8a^2xa{-}8a^2cx{+}11a^2bx{+}7a^3x;
\\
&g_{24}=a^5;\ \ g_{25}=a^4b;\ \ g_{26}=a^4c;\ \ g_{27}=a^4x;\ \ g_{28}=a^3ba;\ \ g_{29}=a^3bc;\ \ g_{30}=a^3bx;\ \ g_{31}=a^3ca;
\\
&g_{32}=a^3cx;\ \ g_{33}=a^3xa;\ \ g_{34}=a^2ba^2;\ \ g_{35}=a^2bab;\ \ g_{36}=a^2bax;\ \ g_{37}=a^2bcx;\ \ g_{38}=a^2bxa;\ \ g_{39}=a^2cab;
\\
&g_{40}=a^2cax;\ \ g_{41}=a^2cxa;\ \ g_{42}=aba^2x;\ \ g_{43}=ababx;\ \ g_{44}=abaxa;\ \ g_{45}=ba^2xa;\ \ g_{46}=babxa.
\end{align*}

\vskip1truecm

{\bf Table 2.} \sl If ${\rm char}\,\K=0$, the Gr\"obner basis of
$R_{3,2}$ is \rm
\small
\begin{align*}
&g_1=b^2{-}ac{+}a^2;\ \ g_2=cb{-}bc{+}a^2;\ \ g_3=c^2{-}ba;\ \ g_4=xb{-}cx{-}by{-}ay{-}ax;\ \ g_5=-ya-ay;
\\
&g_6={-}yb{-}xa{+}by{+}ax;\ \ g_7={-}yc{+}cy{-}cx{-}by{+}bx{-}ax;\ \ g_8=yx;\ \ g_9=y^2{-}x^2;\ \
g_{10}=bac{-}ba^2{-}abc{+}a^2b{+}a^3;
\\
&g_{11}=bca{-}ba^2{-}abc{+}a^2b;\ \ g_{12}=ca^2{-}bab{+}aca{-}a^2c{-}a^3;\ \
g_{13}=cac{+}bab{+}ba^2{-}aca{-}aba{+}2a^2c{+}a^2b{+}a^3;
\\
&g_{14}={-}cxc{+}cxa{-}2cax{+}bxc{-}2bcy{+}bay{-}2bax{-}acy{-}aby{+}2a^2y{-}2a^2x;
\\
&g_{15}=xa^2{-}cxa{+}cax{+}2bcx{+}bay{+}2bax{-}axc{-}axa{+}2acy{-}acx{+}aby{-}2a^2y{+}2a^2x;
\\
&g_{16}={-}xab{-}bxa{+}bax{+}2acy{+}abx{+}a^2y;
\\
&g_{17}=xac{-}cxa{-}cay{+}bxa{-}bcy{+}2bcx{+}bay{-}axc{+}acy{-}2acx{-}aby{-}a^2y;
\\
&g_{18}=-xay-cx^2{+}2bx^2{+}axy-2ax^2;\ \ g_{19}={-}xcx{-}xax{-}cxy{+}2cx^2{-}4bx^2{-}2axy{+}3ax^2;
\\
&g_{20}={-}x^2a{+}ax^2;\ \ g_{21}={-}x^2c{-}xax{-}cxy{+}cx^2{+}bxy{-}bx^2{-}axy;\ \ g_{22}=x^3;
\\
&g_{23}=x^2y;\ \ g_{24}=ba^3{-}abab{-}2aba^2{+}2a^2ca{+}a^2ba{-}3a^3c{-}a^3b{-}a^4;
\\
&g_{25}={-}ba^2b{+}acab{+}3abab{+}5aba^2{-}5a^2ca{+}a^2bc{-}3a^2ba{+}6a^3c{+}5a^4;
\\
&g_{26}={-}ba^2c{-}acab{-}abab{-}2aba^2{+}2a^2ca{+}a^2ba{-}3a^3c{-}2a^4;
\\
&g_{27}=baba{+}acab{+}abab{+}2aba^2{-}3a^2ca{+}a^2bc{-}a^2ba{+}4a^3c{+}3a^4;
\\
&g_{28}=babc{-}acab{-}4abab{-}6aba^2{+}7a^2ca{-}a^2bc{+}3a^2ba{-}7a^3c{-}7a^4;
\\&g_{29}={-}baxc{+}baxa{-}baby{+}ba^2y{-}4ba^2x{+}2acxa{+}2acay{-}2acax{-}ab
     xa{-}abcy{-}4abcx
\\
&\qquad\quad {-}3abay{+}2abax{-}a^2xc{+}a^2xa{+}a^2cy{+}3a^2cx{+}2a^
     2by{+}a^2bx{-}a^3y{+}3a^3x;
\\&g_{30}={-}bcxa{-}3baxa{-}2baby{-}4ba^2y{-}3ba^2x{+}axca{+}3acxa{+}6acay{-}
     2acax{-}abxc{-}3abcy{-}5abcx
\\
&\qquad\quad {-}abay{+}2abax{+}a^2xc{-}3a^2xa{-}3a^2
     cy{+}5a^2cx{+}5a^2by{+}a^2bx{-}5a^3y{+}5a^3x;
\\
&g_{31}={-}{\bf 2}bcx^2{+}2baxy{-}3bax^2{-}4axcy{+}7axax{+}7acxy{-}12acx^2{-}13a
     bxy{+}7abx^2{+}9a^2xy{-}23a^2x^2;
\\
&g_{32}={\bf 2}bcxy{+}3baxy{-}3axcy{+}2axax{+}acxy{-}7acx^2{-}5abxy{+}6abx^2
     {+}5a^2xy{-}17a^2x^2;
\\
&g_{33}={\bf 2}bxax{-}23baxy{+}14bax^2{+}31axcy{-}48axax{-}51acxy{+}117acx^2{+}
     93abxy{-}86abx^2{-}71a^2xy{+}227a^2x^2;
\\
&g_{34}={-}bxca{+}baxa{-}baby{+}3ba^2y{-}3acxa{-}6acay{-}2acax{+}2abxc{+}2
     abxa{-}abcy{+}6abcx{+}4abay
\\
&\qquad\quad  {-}10abax{+}a^2xa{-}2a^2cy{-}9a^2cx{-}5a^2
     by{-}a^2bx{+}11a^3y{-}7a^3x;
\\
&g_{35}={\bf 2}bxcy{+}66baxy{-}51bax^2{-}82axcy{+}130axax{+}138acxy{-}312acx^
     2{-}250abxy{+}236abx^2{+}189a^2xy{-}637a^2x^2;
\\
&g_{36}=caba{+}4abab{+}4aba^2{-}6a^2ca{+}a^2bc{-}a^2ba{+}7a^3c{+}2a^3b{+}7a^4;
\\
&g_{37}=cabc{-}2acab{-}aba^2{+}2a^2ca{+}2a^2bc{-}2a^3c{-}2a^4;
\\
&g_{38}={-}cabx{-}7baxa{-}6baby{-}5ba^2y{-}19ba^2x{+}3axca{+}10acxa{+}19ac
     ay{-}11acax{-}3abxc{-}2abxa{-}11abcy
\\
&\qquad\quad   {-}22abcx{-}9abay{+}7abax{-}8
     a^2xa{-}2a^2cy{+}24a^2cx{+}21a^2by{+}5a^2bx{-}14a^3y{+}20a^3x;
\\
&g_{39}={-}caby{-}3baxa{-}3baby{+}2babx{-}2ba^2y{-}7ba^2x{+}axca{+}2acxa{+}
     4acay{-}6acax{+}abxa{-}4abcy{-}5abcx
\\
&\qquad\quad      {-}2abax{-}4a^2xa{-}a^2cy{+}4a^
     2cx{+}3a^2by{+}4a^2bx{-}2a^3y{+}6a^3x;
\\
&g_{40}=
     caxa{+}baxa{+}3baby{+}6ba^2x{-}axca{-}acxa{-}3acay{+}4acax{+}3ab
     cy{+}3abcx
 \\
&\qquad\quad      {+}abax{+}a^2xc{+}3a^2xa{-}4a^2cx{-}4a^2by{-}a^2bx{+}a^3y{-}4a^3x;
\\
&g_{41}={-}caxc{-}3baxa{+}babx{-}4ba^2y{+}7ba^2x{-}3acxa{-}4acay{+}5acax{+}
     3abxa{+}2abcy{+}7abcx{+}6abay
\\
&\qquad\quad {-}3abax{+}a^2xc{-}2a^2xa{-}3a^2cy{-}9
     a^2cx{-}7a^2by{-}a^3y{-}5a^3x;
\\
&g_{42}={-}cax^2{+}baxy{-}bax^2{-}2axcy{+}3axax{+}3acxy{-}7acx^2{-}6abxy{+}4
     abx^2{+}4a^2xy{-}14a^2x^2;
\\
&g_{43}={-}{\bf 2}caxy{-}7baxy{+}10bax^2{+}13axcy{-}23axax{-}24acxy{+}45acx^2{+}
     42abxy{-}29abx^2{-}33a^2xy{+}75a^2x^2;
\\
&g_{44}={\bf 2}cxax{-}17baxy{+}12bax^2{+}25axcy{-}40axax{-}43acxy{+}95acx^2{+}
     77abxy{-}68abx^2{-}59a^2xy{+}179a^2x^2;
\\
&g_{45}={\bf 2}xaxa{-}11baxy{+}8bax^2{+}13axcy{-}16axax{-}17acxy{+}47acx^2{+}35
     abxy{-}34abx^2{-}25a^2xy{+}101a^2x^2;
\\
&g_{46}={-}{\bf 2}xaxc{+}9baxy{-}2bax^2{-}7axcy{+}9axax{+}10acxy{-}25acx^2{-}18a
     bxy{+}21abx^2{+}15a^2xy{-}61a^2x^2;
\end{align*}

\begin{align*}
&g_{47}=xax^2;\ \ g_{48}=xaxy;
\\
&g_{49}={\bf 2}xcax{+}39baxy{-}17bax^2{-}63axcy{+}102axax{+}105acxy{-}235acx^
     2{-}191abxy{+}150abx^2{+}140a^2xy{-}412a^2x^2;
\\
&g_{50}={-}{\bf 2}xcay{+}65baxy{-}45bax^2{-}83axcy{+}121axax{+}128acxy{-}313acx
     ^2{-}244abxy{+}225abx^2{+}182a^2xy{-}630a^2x^2;
\\
&g_{51}=a^5;\ \ g_{52}=a^4b;\ \ g_{53}=a^4c;\ \ g_{54}=a^4x;\ \  g_{55}=a^4y;\ \ g_{56}=a^3ba;\ \
g_{57}=a^3bc;
\\
&g_{58}=a^3bx;\ \ g_{59}=a^3by; \ \ g_{60}=a^3ca;\ \ g_{61}=a^3cx;\ \ g_{62}=a^3cy;\ \ g_{63}=a^3xa;\ \
g_{64}=a^3xc;
\\
&g_{65}=a^3x^2;\ \ g_{66}=a^3xy;\ \ g_{67}=a^2ba^2;\ \ g_{68}=a^2bab;\ \ g_{69}=a^2bax;\ \
g_{70}=a^2bay;\ \ g_{71}=a^2bcx;
\\
&g_{72}=a^2bcy;\ \ g_{73}=a^2bxa;\ \ g_{74}=a^2bxc;\ \ g_{75}=a^2bx^2;\ \ g_{76}=a^2bxy;\ \
g_{77}=a^2cab;\ \ g_{78}=a^2cax;
\\
&g_{79}=a^2cay;\ \ g_{80}=a^2cxa;\ \ g_{81}=a^2cx^2;\ \ g_{82}=a^2cxy;\ \ g_{83}=a^2xax;\ \
g_{84}=a^2xca;\ \ g_{85}=a^2xcy;
\\
&g_{86}=aba^2x;\ \ g_{87}=aba^2y;\ \ g_{88}=ababx;\ \ g_{89}=ababy;\ \ g_{90}=abaxa;\ \
g_{91}=abax^2;\ \ g_{92}=abaxy;
\\
&g_{93}=axcab;\ \ g_{94}=ba^2xa;\ \ g_{95}=ba^2xc;\ \ g_{96}=ba^2x^2;\ \ g_{97}=ba^2xy;\ \
g_{98}=babxa;\ \ g_{99}=babxc;
\\
&g_{100}=babx^2;\ \ g_{101}=babxy;\ \ g_{102}=baxax.
\end{align*}

\normalsize

\vskip1truecm

\scshape

\noindent  Natalia Iyudu\ \

\noindent School of Mathematics

\noindent  The University of Edinburgh

\noindent James Clerk Maxwell Building

\noindent The King's Buildings

\noindent Mayfield Road

\noindent Edinburgh

\noindent Scotland EH9 3JZ

\noindent E-mail address: \qquad {\tt niyudu@staffmail.ed.ac.uk}\ \ \

{\rm and}\ \ \

\noindent   Stanislav Shkarin

\noindent Queens's University Belfast

\noindent Pure Mathematics Research Centre

\noindent University road, Belfast, BT7 1NN, UK

\noindent E-mail address: \qquad {\tt s.shkarin@qub.ac.uk}

\end{document}